\DeclareMathOperator{\rk}{rank}
\newcommand{\RRR}{\mathbb{R}}
\theoremstyle{plain}
\newtheorem{theorem}{Theorem}
\newtheorem{lemma}[theorem]{Lemma}
\newtheorem{example}[theorem]{Example}
\begin{document}

\begin{frontmatter}
  
  \title{Finiteness of small factor analysis models}
  \runtitle{Finiteness of factor analysis}

\begin{aug}
\author{\fnms{Mathias}
  \snm{Drton}\ead[label=e2]{drton@uchicago.edu}} 
\and
\author{\fnms{Han}
  \snm{Xiao}\thanksref{t1}\ead[label=e1]{xiao@galton.uchicago.edu}} 
 
\thankstext{t1}{This material is based upon work supported by the National
  Science Foundation under Grant No. DMS-0746265.  Mathias Drton was also
  supported by an Alfred P. Sloan Fellowship.} 
 \runauthor{M.~Drton, H.~Xiao}

\affiliation{University of Chicago}
\address{
  Department of Statistics, 5734 S.~University Ave, Chicago, IL  60637\\ 
\printead{e2,e1}}
\end{aug}

\begin{abstract}
  We consider small factor analysis models with one or two factors.  Fixing
  the number of factors, we prove a finiteness result about the covariance
  matrix parameter space when the size of the covariance matrix increases.
  According to this result, there exists a distinguished matrix size
  starting at which one can determine whether a given covariance matrix
  belongs to the parameter space by determining whether all principal
  submatrices of the distinguished size belong to the corresponding
  parameter space.  We show that the distinguished matrix size is equal to
  four in the one-factor model and six with two factors.
\end{abstract}


\begin{keyword}
\kwd{Algebraic statistics}
\kwd{graphical model}
\kwd{multivariate normal distribution}
\kwd{latent variables}
\end{keyword}



\end{frontmatter}

\renewcommand{\labelenumi}{\arabic{enumi}.}


\section{Introduction}
\label{sec:introduction}

Suppose we observe a sample of multivariate normal random vectors and wish
to test whether their covariance matrix is diagonal.  The likelihood ratio
test for this problem involves the determinant of the sample correlation
matrix. Therefore, this test cannot be used if the sample size $n$ is
smaller than the number $p$ of entries in the random vectors, because the
sample correlation matrix will always be singular.  A nice way around this
problem was proposed in \cite{schott:2005}, where the sum of squared
pairwise sample correlations is used as a test statistics.  The relevant
distribution theory involves a central limit theorem in the paradigm where
$n,p\to\infty$ such that $p/n\to c\in (0,\infty)$.

Why is it possible to prove such a limit theorem and which situations are
candidates for development of similar results and associated statistical
techniques?  We believe that a fundamental aspect of these questions is a
property of finiteness.  In the above problem this property amounts to the
ability to determine whether a covariance matrix is diagonal by verifying
whether each principal $2\times 2$ submatrix is diagonal.  The squared
sample correlations summed up in the test statistics of \cite{schott:2005}
do exactly that.  In this paper we show that such finiteness structure
arises more generally in factor analysis models.

The factor analysis model for $p$ observed variables and with $m$ factors
is the set of multivariate normal distributions $\mathcal{N}_p(\mu,\Sigma)$
with arbitrary mean vector $\mu$ and a covariance matrix $\Sigma$ in the
set
\[
F_{p,m} = \left\{ \Delta+\Gamma\Gamma^t \::\: \Delta
  \text{ positive definite and diagonal},\; \Gamma\in\RRR^{p\times m}
\right\}.
\]
For background on this classical statistical model see, for instance,
\cite{anderson:1956,harman:1976}.  Note also that for $m=0$ it is
reasonable to define $F_{p,0}$ to be the set of positive definite and
diagonal $p\times p$ matrices.  In this paper we establish the following
result that resolves part of a conjecture in \cite{drton:ptrf}; see also
open problem 7.8 in \cite{drton:2009}.

\begin{theorem}
  \label{thm:main}
  Suppose $m\in\{0,1,2\}$, and let $\Sigma=(\sigma_{ij})$ be a positive
  definite matrix of size $p\times p$ with $p\ge 2(m+1)$.  Then $\Sigma$ is
  in $F_{p,m}$ if and only if the principal submatrix $\Sigma_{A,
    A}=(\sigma_{ij})_{i,j\in A}$ is in $F_{2(m+1),m}$ for each index set
  $A\subseteq \{1,\dots,p\}$ of size $2(m+1)$.
\end{theorem}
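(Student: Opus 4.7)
The ``only if'' direction is immediate: given a decomposition $\Sigma=\Delta+\Gamma\Gamma^t$, restriction to any index set $A$ yields $\Sigma_{A,A}=\Delta_{A,A}+\Gamma_{A,\cdot}\Gamma_{A,\cdot}^t$, which is of the same form. The base case $m=0$ of the converse is equally immediate: $F_{p,0}$ consists of the positive definite diagonal matrices, and the hypothesis that each $2\times 2$ principal submatrix is diagonal forces every off-diagonal entry of $\Sigma$ to vanish.

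For the two nontrivial cases $m\in\{1,2\}$, my plan is to combine an algebraic characterization of $F_{p,m}$ with a local-to-global patching of factor loadings. The algebraic input is that the vanishing of all $(m+1)\times(m+1)$ \emph{off-diagonal} minors of $\Sigma$ (i.e., minors whose row index set $I$ and column index set $J$ are disjoint) constrains the off-diagonal structure of $\Sigma$ to rank at most $m$. Each such minor is determined by the $2(m+1)$ indices in $I\cup J$ and therefore lives inside some $2(m+1)\times 2(m+1)$ principal submatrix, so the hypothesis of the theorem guarantees that all such minors of $\Sigma$ vanish globally.

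For $m=1$ this specializes to the vanishing of all tetrads $\sigma_{ij}\sigma_{kl}=\sigma_{ik}\sigma_{jl}$. From each $4\times 4$ submatrix $\Sigma_{A,A}\in F_{4,1}$ I would extract a local loading vector $\gamma^{(A)}\in\RRR^4$ satisfying $\gamma^{(A)}_i\gamma^{(A)}_j=\sigma_{ij}$ and then patch these into a global $\gamma\in\RRR^p$: after disposing of the trivial case of a vanishing row of off-diagonals, fix an index $i_0$ with some $\sigma_{i_0 j}\ne 0$, anchor $\gamma_{i_0}$ from one admissible quadruple containing $i_0$, and set $\gamma_k=\sigma_{i_0 k}/\gamma_{i_0}$. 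The globally vanishing tetrads guarantee that $\gamma_k\gamma_l=\sigma_{kl}$ holds for every pair, while positivity of each residual diagonal entry $\sigma_{ii}-\gamma_i^2$ is inherited from the $4\times 4$ decomposition on any quadruple containing $i$.

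For $m=2$ the scheme is analogous but markedly more delicate because the local loading matrices $\Gamma^{(A)}\in\RRR^{6\times 2}$ attached to each $6\times 6$ submatrix $A$ are determined only up to the action of $O(2)$. My plan is to anchor a reference $6\times 6$ submatrix on which the rows of $\Gamma^{(A)}$ span $\RRR^2$, fix an orthogonal frame there, and then transport this frame to each other $6\times 6$ submatrix by requiring that the shared rows of the two local loading matrices agree. The main obstacle, and what I expect to be the crux of the proof, is verifying that this frame-transport is unambiguous on pairwise overlaps, closes up consistently along cycles of $6\times 6$ submatrices, and remains meaningful in the presence of rank drops (submatrices on which the local loadings span only a line, effectively reducing to a one-factor model). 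Once a global $\Gamma$ is in hand, positivity of $\Delta=\Sigma-\Gamma\Gamma^t$ again follows from positivity of the local residual diagonals.
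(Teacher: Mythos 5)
Your ``only if'' direction, the $m=0$ case, and the observation that every off-diagonal $(m+1)\times(m+1)$ minor lives inside a $2(m+1)\times 2(m+1)$ principal submatrix (hence vanishes globally) are all correct. The $m=1$ patching argument is also essentially workable, though it takes a different route from the paper: you build a global loading vector directly from one anchor index, whereas the paper inducts on $p$, gluing the representations of $\Sigma_{\setminus p,\setminus p}$ and $\Sigma_{\setminus 1,\setminus 1}$ along their common $(p-2)\times(p-2)$ block via a uniqueness lemma for rank-$m$ matrices with prescribed off-diagonal entries. Your version would still need a little care with the zero pattern (you must argue that $\sigma_{i_0k}=0$ forces the entire row $k$ to vanish off the diagonal, which the $4\times4$ hypotheses do deliver), but that is a detail, not a gap.

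The $m=2$ case, however, contains a genuine gap, and it sits exactly where you say you ``expect the crux to be.'' You propose to transport an $O(2)$-frame between overlapping $6\times 6$ blocks ``by requiring that the shared rows of the two local loading matrices agree,'' but this prescription is not always available: the representation of the overlap can genuinely fail to be unique. Concretely, the paper's Lemma~\ref{lem:m2-unique} shows that a $5\times 5$ matrix in $F_{5,2}$ whose loading matrix has three rows spanning only a line admits a one-parameter family of inequivalent decompositions $\Delta+\Gamma\Gamma^t$ (only the product $\gamma_{42}'\gamma_{52}'$ of the two remaining second-column entries is determined, not the entries themselves). In that situation the loading matrices extracted from two overlapping blocks need not agree on the shared rows under any orthogonal change of frame, so your gluing step breaks down. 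The paper resolves this by a dedicated argument: it isolates this degenerate case, and uses an auxiliary $6\times 6$ principal submatrix (the matrix $S=\Sigma_{\{1,3,4,5,6,p\},\{1,3,4,5,6,p\}}$, which by hypothesis lies in $F_{6,2}$) together with the uniqueness lemma to force $\delta_{55}=d_{55}$ and hence $\gamma_{52}'=g_{52}'$, after which the two local frames can be matched. Your proposal names this obstacle but supplies no mechanism for overcoming it; as written, the $m=2$ proof is incomplete precisely at its hardest point. (A secondary caution: your phrase that vanishing off-diagonal $(m+1)\times(m+1)$ minors ``constrains the off-diagonal structure to rank at most $m$'' should not be read as claiming a rank-$m$ off-diagonal completion exists --- for $m\ge 2$ the model is not cut out by off-diagonal minors alone, which is why the local memberships $\Sigma_{A,A}\in F_{6,2}$ must carry real weight in the argument.)
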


As motivated above, this result is of statistical interest as it suggests
that in a high-dimensional setting with large number of variables $p$ a
test of the factor analysis models can be carried out by testing smaller
marginal hypotheses concerning only $2(m+1)$ variables.  While our result
provides the theoretical basis for such tests, an appropriate distribution
theory still has to be worked out.  This, however, is a topic beyond the
scope of this 
note.

The remainder of the paper is devoted to the proof of
Theorem~\ref{thm:main}.  In Section~\ref{sec:one-factor-models}, we outline
our approach to the finiteness problem and resolve the one-factor case.  In
Section~\ref{sec:two-factor-models}, we tackle the more complicated case of
two factors.  Concluding remarks are given in Section~\ref{sec:conclusion}.

\section{Approach to the problem and one-factor models}
\label{sec:one-factor-models}

We first introduce some notational conventions.  
Suppose $I$ and $J$ are two index sets. In this paper $I$ is typically of the form
$[p]=\{1,2,\ldots,p\}$ or $\{2,3,\ldots,p\}$ with $p\in\mathbb{N}$.
For any finite set $A$, let $|A|$ denote its cardinality.
Suppose $\Lambda=(\lambda_{ij})_{i\in I,j\in J}$ is an $|I|
\times |J|$ matrix.  For $A \subset I$ and $B\subset J$, we let
$\Lambda_{A,B}=(\lambda_{ij})_{i\in A, j\in B}$ denote the $|A|\times|B|$
submatrix.  When using the complement of a set $A$ as an index set we write
$\setminus A$ as in
\[
\Lambda_{\setminus A, \setminus B} := \Lambda_{I\setminus A, J\setminus B}.
\]
As a further shorthand, we let $\setminus i :=\setminus\{i\}$, 
$\Lambda_{A} := \Lambda_{A,J}$ and $\Lambda_{,B} := \Lambda_{I,B}$.

If $\Sigma\in F_{p,m}$, then the representation
$\Sigma=\Delta+\Lambda\Lambda^t$ is not unique because we may multiply an
orthogonal matrix to $\Lambda$ from the right.  However, the diagonal
matrix $\Delta$, and thus, the positive semi-definite and rank $m$ matrix
$\Lambda\Lambda^t$ may be unique.  We will repeatedly use the following
lemma to establish such uniqueness.

\begin{lemma}
  \label{lem:1}
  Let $p \geq 2m+1$, and consider two $p\times p$ matrices
  $\Psi=(\psi_{ij})$ and $\Phi=(\phi_{ij})$ with the same off-diagonal
  entries and same rank $m$. For any $i \in [p]$, if we can find two
  disjoint subsets $A,B \subset [p]\setminus\{i\}$ of cardinality
  $|A|=|B|=m$ such that $\det(\Psi_{A,B}) \neq 0$, then
  $\psi_{ii}=\phi_{ii}$.
\end{lemma}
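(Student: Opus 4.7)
The plan is to use the rank constraint to produce a vanishing $(m+1)\times(m+1)$ minor that isolates the diagonal entries $\psi_{ii}$ and $\phi_{ii}$.

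First, I would consider the $(m+1)\times(m+1)$ submatrix obtained by using row index set $\{i\}\cup A$ and column index set $\{i\}\cup B$. Since $A$ and $B$ are disjoint subsets of $[p]\setminus\{i\}$, the condition $p\ge 2m+1$ guarantees that such disjoint subsets of size $m$ can exist. Because $\Psi$ has rank $m$, every $(m+1)\times(m+1)$ minor vanishes, so $\det(\Psi_{\{i\}\cup A,\{i\}\cup B})=0$, and the same holds for $\Phi$.

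Next I would compare these two submatrices entry by entry. Order the rows with $i$ first, then the elements of $A$; order the columns with $i$ first, then the elements of $B$. The $(1,1)$ entry is then $\psi_{ii}$ (respectively $\phi_{ii}$). Every other entry of the submatrix is an off-diagonal entry of the ambient matrix: entries in the first row are $\psi_{ij}$ for $j\in B$ with $i\notin B$, entries in the first column are $\psi_{ji}$ for $j\in A$ with $i\notin A$, and the remaining $m\times m$ block is $\Psi_{A,B}$, which has no diagonal entries because $A\cap B=\emptyset$. By hypothesis all these off-diagonal entries agree in $\Psi$ and $\Phi$, so the two $(m+1)\times(m+1)$ submatrices differ only in their $(1,1)$ entry.

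Finally, I would expand both determinants along the first row (or first column). The cofactor of the $(1,1)$ entry is $\det(\Psi_{A,B})=\det(\Phi_{A,B})$, and all other cofactors are built from entries common to $\Psi$ and $\Phi$. Subtracting the two expansions gives
\[
0 \;=\; \det(\Psi_{\{i\}\cup A,\{i\}\cup B}) - \det(\Phi_{\{i\}\cup A,\{i\}\cup B}) \;=\; (\psi_{ii}-\phi_{ii})\det(\Psi_{A,B}),
\]
and the nonvanishing hypothesis on $\det(\Psi_{A,B})$ forces $\psi_{ii}=\phi_{ii}$. There is no real obstacle here; the only subtlety is the bookkeeping needed to verify that every entry of the chosen $(m+1)\times(m+1)$ submatrix other than the $(1,1)$ position is an off-diagonal entry of the ambient $p\times p$ matrix, which is exactly why $A$, $B$, and $\{i\}$ must be pairwise disjoint.
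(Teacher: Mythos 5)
Your proposal is correct and follows essentially the same route as the paper: both form the $(m+1)\times(m+1)$ submatrices indexed by $\{i\}\cup A$ and $\{i\}\cup B$, observe that they vanish by the rank condition and agree in every entry except the $(1,1)$ position, and conclude from $\det(\Psi_{A,B})\neq 0$ that $\psi_{ii}=\phi_{ii}$. Your cofactor expansion simply makes explicit the final step that the paper leaves implicit.
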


\begin{proof}
  Since $\Psi$ and $\Phi$ are both of rank $m$, the following two $(m+1)
  \times (m+1)$ minors are equal to zero:
$$
\left| \begin{array}{cc}
\psi_{ii} & \Psi_{i,B} \\
\Psi_{A,i} & \Psi_{A,B}
\end{array} \right| 
= \left| \begin{array}{cc}
\phi_{ii} & \Phi_{i,B} \\
\Phi_{A,i} & \Phi_{A,B}
\end{array} \right| = 0. 
$$
The above two minors are entry-wise the same except for $\psi_{ii}$ and
$\phi_{ii}$. Since $\det(\Psi_{A,B}) = \det(\Phi_{A,B}) \neq 0$, it follows
that $\psi_{ii} = \phi_{ii}$.
\end{proof}

The next lemma will provide a way to give an induction-based proof of
Theorem~\ref{thm:main}.

\begin{lemma}
  \label{lem:2}
  Let $p \geq 2m+3$, and suppose that $\Sigma\in\RRR^{p\times p}$ is a
  positive definite matrix that has all $(p-1)\times (p-1)$ principal
  submatrices in $F_{p-1,m}$.  Write
  \begin{equation*}
    \Sigma_{\setminus p,\setminus p} = \Delta + \Gamma \Gamma^t
    \quad\text{and}\quad 
    \Sigma_{\setminus 1,\setminus 1} = D + GG^t
  \end{equation*}
  with $\Delta$ and $D$ positive definite and diagonal,
  and $\Gamma,G\in\RRR^{(p-1)\times m}$. To see the correspondence with the original matrix $\Sigma$ clearly, label the rows of $D$ and $G$ by $2,3,\ldots,p$. Then the
  following two conditions imply that $\Sigma$ belongs to $F_{p,m}$:
  \begin{itemize}
  \item[(i)] The two matrices $\Gamma$ and $G$ satisfy $\Gamma_{\setminus
      1} = G_{\setminus p}$;
  \item[(ii)] There are disjoint subsets $B,C\subseteq
    \left([p]\setminus\{1,p\}\right)$  
    of cardinality $|B|=|C|=m$ such that $\det(\Sigma_{B,C}) \neq 0$.
  \end{itemize} 
\end{lemma}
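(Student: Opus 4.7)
The plan is to verify $\Sigma \in F_{p,m}$ by exhibiting an explicit decomposition $\Sigma = \widetilde\Delta + \widetilde\Gamma \widetilde\Gamma^t$ obtained by stitching the two given decompositions. Define $\widetilde\Gamma \in \RRR^{p\times m}$ to have first row $\Gamma_1$, middle rows (indexed by $\{2,\ldots,p-1\}$) equal to the common matrix $\Gamma_{\setminus 1} = G_{\setminus p}$ from (i), and last row $G_p$. Restricting both decompositions to the overlapping principal block indexed by $\{2,\ldots,p-1\}$ and using (i) forces $\Delta_{ii} = D_{ii}$ for $2 \le i \le p-1$, so there is a well-defined positive definite diagonal matrix $\widetilde\Delta$ whose diagonal agrees with $\Delta$ on $\{1,\ldots,p-1\}$ and with $D$ on $\{p\}$. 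With these choices, $\widetilde\Delta + \widetilde\Gamma\widetilde\Gamma^t$ already matches $\Sigma$ on every entry both of whose indices lie in $\setminus p$ or both of which lie in $\setminus 1$; the only uncovered entry is $\sigma_{1p}$, so the task reduces to proving the single scalar identity $\sigma_{1p} = \Gamma_1 G_p^t$.

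To get a usable formula for $\Gamma_1 G_p^t$, I would exploit condition (ii). Since $B$ and $C$ are disjoint and lie inside $\{2,\ldots,p-1\}$, the $m\times m$ matrix $\Sigma_{B,C}$ coincides with both $\Gamma_B \Gamma_C^t$ and $G_B G_C^t$; its nonsingularity forces $\Gamma_B$ and $G_C$ to be invertible, so their rows form bases of $\RRR^m$. Write $\Gamma_1 = \alpha^t \Gamma_B$ and $G_p = \beta^t G_C$. Because all indices in $\{1\}\times C$ and in $B\times\{p\}$ are distinct and sit inside $\setminus p$ and $\setminus 1$ respectively, one can read off $\Sigma_{1,C} = \alpha^t \Sigma_{B,C}$ and $\Sigma_{B,p} = \Sigma_{B,C}\beta$; these determine $\alpha$ and $\beta$ from $\Sigma$ alone and give the closed form
\[
\Gamma_1 G_p^t \;=\; \alpha^t\, \Sigma_{B,C}\, \beta \;=\; \Sigma_{1,C}\, \Sigma_{B,C}^{-1}\, \Sigma_{B,p}.
\]

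The main obstacle is to show that $\sigma_{1p}$ equals the same Schur-complement expression, and this is where the hypothesis $p \ge 2m+3$ becomes essential. It guarantees the existence of an index $j \in [p]\setminus (\{1,p\}\cup B\cup C)$. By assumption $\Sigma_{\setminus j,\setminus j} \in F_{p-1,m}$, so it admits some decomposition $\Delta^{(j)} + \Gamma^{(j)}\Gamma^{(j)t}$. In this factorization, the submatrix of $\Sigma$ on rows $\{1\}\cup B$ and columns $\{p\}\cup C$---two disjoint sets of size $m+1$ both avoiding $j$---factors as a product of an $(m+1)\times m$ matrix and an $m\times(m+1)$ matrix, hence has rank at most $m$, and its $(m+1)\times(m+1)$ determinant vanishes. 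Expanding this determinant via the Schur-complement identity using the invertible block $\Sigma_{B,C}$ gives exactly $\sigma_{1p} = \Sigma_{1,C}\Sigma_{B,C}^{-1}\Sigma_{B,p}$. Combining with the previous paragraph yields $\sigma_{1p} = \Gamma_1 G_p^t$, completing the identification $\Sigma = \widetilde\Delta + \widetilde\Gamma\widetilde\Gamma^t$ and placing $\Sigma$ in $F_{p,m}$.
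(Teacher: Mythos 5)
Your proposal is correct and follows essentially the same route as the paper: stitch the two factorizations into a single candidate decomposition, observe that it reproduces every entry of $\Sigma$ except possibly $\sigma_{1p}$, use $p\ge 2m+3$ to find a spare index $j$ so that the $(m+1)\times(m+1)$ off-diagonal minor of $\Sigma$ on rows $\{1\}\cup B$ and columns $C\cup\{p\}$ vanishes, and exploit $\det(\Sigma_{B,C})\neq 0$ to pin down $\sigma_{1p}$. The only cosmetic difference is that you extract $\sigma_{1p}$ via an explicit Schur-complement identity, whereas the paper compares two vanishing minors that differ only in that one entry.
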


\begin{proof}
  From $\Delta=(\delta_{ij})$, $\Gamma=(\gamma_{ij})$, $D$ and $G$ form the matrices
  \[
  \bar{\Lambda} = \begin{pmatrix}
      \delta_{11} & 0 \\
      0 & D
    \end{pmatrix}
    \quad \text{and} \quad  \bar{\Gamma} = \begin{pmatrix}
      \Gamma_1 \\
      G
    \end{pmatrix}.
  \]
  Let $\bar{\Sigma} = \bar{\Lambda} + \bar{\Gamma} \bar{\Gamma}^t$, which
  is a matrix in $F_{p,m}$. We claim that $\Sigma=\bar{\Sigma} \in
  F_{p,m}$.
 
  By assumption (i), $\Gamma$ and $G$ agree in $p-2$ rows and it holds
  that $\bar{\sigma}_{ij} = \sigma_{ij}$ except possibly for the pair
  $(i,j) = (1,p)$.  In order to show that $\bar{\sigma}_{1p} =
  \sigma_{1p}$, we use the index sets $B$ and $C$ from conditon (ii) to
  construct the following three $(m+1) \times (m+1)$ minors, which are all
  equal to zero:
  \begin{equation}
    \label{eq:three-minors}
    \left| \begin{array}{cc}
        \bar{\Sigma}_{1,C} & \bar{\sigma}_{1p} \\
        \bar{\Sigma}_{B,C} & \bar{\Sigma}_{B,p}
      \end{array} \right| 
    = \left| \begin{array}{cc}
        \Sigma_{1,C} & \bar{\sigma}_{1p} \\
        \Sigma_{B,C} & \Sigma_{B,p}
      \end{array} \right| 
    = \left| \begin{array}{cc}
        \Sigma_{1,C} & \sigma_{1p} \\
        \Sigma_{B,C} & \Sigma_{B,p}
      \end{array} \right| = 0.
  \end{equation}
  The first minor is zero because $\{1\}\cup B$ and $C\cup\{p\}$ are two
  disjoint subsets of cardinality $m+1$ and because $\bar\Sigma$ is in
  $F_{p,m}$.  Recall that $\rk(\bar{\Gamma} \bar{\Gamma}^t)=m$.  The last
  minor is zero for the same reason.  Recall that we assume that $p\ge
  2m+3$, whereas the first principal submatrix shown in
  (\ref{eq:three-minors}) involves only $2m+2$ different variables.  
  The middle minor is zero
  because it is entry-wise equal to the first minor. Since by assumption
  $\det(\Sigma_{B,C}) \neq 0$, Lemma~\ref{lem:1} yields that
  $\bar{\sigma}_{1p} = \sigma_{1p}$.
\end{proof}

We are now ready to study the case of $m=1$ factor.

\begin{theorem}[$m=1$]
  \label{thm:m1}
  A positive definite matrix $\Sigma\in\RRR^{p\times p}$ with $p\ge 4$
  belongs to $F_{p,1} \setminus F_{p,0}$ if and only if every $4
  \times 4$ principal submatrix belongs to $F_{4,1}$ and at least one $2 \times 2$
  principal submatrix does not belong to $F_{2,0}$.
\end{theorem}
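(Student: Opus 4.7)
I would prove the theorem by induction on $p$, using Lemma~\ref{lem:2} as the driving tool.  The forward direction is routine: restricting a representation $\Sigma = \Delta + \gamma \gamma^t$ (with $\gamma \in \RRR^p$) to a principal index set produces one of the same form, and if $\gamma$ has fewer than two nonzero entries then $\Sigma$ is already diagonal, so $\Sigma \in F_{p,1} \setminus F_{p,0}$ forces some $\sigma_{ij} = \gamma_i \gamma_j$ to be nonzero.  The case $p = 4$ is tautological: the only $4 \times 4$ principal submatrix of $\Sigma$ is $\Sigma$ itself, and the $2 \times 2$ hypothesis is exactly $\Sigma \notin F_{4, 0}$.

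For the inductive step, assume $p \geq 5$ and that the theorem holds for size $p - 1$.  By hypothesis some off-diagonal entry $\sigma_{ab}$ is nonzero; since $p \geq 4$ I can choose $s, t \in [p] \setminus \{a, b\}$, and after relabeling I may take $s = 1$ and $t = p$, so that $\{a, b\} \subseteq \{2, \ldots, p - 1\}$.  This immediately secures condition~(ii) of Lemma~\ref{lem:2} with $B = \{a\}$ and $C = \{b\}$.  Every $(p - 1) \times (p - 1)$ principal submatrix $\Sigma_{\setminus i, \setminus i}$ inherits the $4 \times 4$ hypothesis of Theorem~\ref{thm:m1}; if it is diagonal it lies in $F_{p - 1, 0} \subseteq F_{p - 1, 1}$, and otherwise the inductive hypothesis puts it in $F_{p - 1, 1} \setminus F_{p - 1, 0}$.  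In particular I obtain representations $\Sigma_{\setminus p, \setminus p} = \Delta + \gamma \gamma^t$ and $\Sigma_{\setminus 1, \setminus 1} = D + g g^t$ with column vectors $\gamma, g \in \RRR^{p - 1}$ (playing the roles of $\Gamma$ and $G$ in Lemma~\ref{lem:2}).

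The substantive step is to verify condition~(i) of Lemma~\ref{lem:2}, i.e.\ that $\gamma_{\setminus 1} = g_{\setminus p}$ after an appropriate choice of signs.  Both rank-$\leq 1$ outer products $\gamma_{\setminus 1} \gamma_{\setminus 1}^t$ and $g_{\setminus p} g_{\setminus p}^t$ agree with $\Sigma_{\setminus \{1, p\}, \setminus \{1, p\}}$ off the diagonal, and by construction $\sigma_{ab} = \gamma_a \gamma_b = g_a g_b$ is nonzero, forcing $\gamma_a, \gamma_b, g_a, g_b$ all to be nonzero.  From $\gamma_a \gamma_k = g_a g_k$ for every other $k$ in $\{2, \ldots, p - 1\}$ one checks first that $\gamma_{\setminus 1}$ and $g_{\setminus p}$ have the same support, then that $g_{\setminus p} = r\, \gamma_{\setminus 1}$ for the nonzero scalar $r = g_a / \gamma_a$; matching the nonzero $(a, b)$ entry then forces $r^2 = 1$, and flipping the sign of $g$ if necessary makes $r = 1$.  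Lemma~\ref{lem:2} then yields $\Sigma \in F_{p, 1}$, and the $2 \times 2$ hypothesis excludes $\Sigma \in F_{p, 0}$.  The most delicate part of the argument is precisely this uniqueness step: the one-factor decomposition of a small matrix is not unique in general, but becomes so as soon as the rank-$1$ component has support of size at least two, which is exactly what the relabeling arranges.
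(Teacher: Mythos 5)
Your overall architecture (induction on $p$, Lemma~\ref{lem:2} for the step, uniqueness of the rank-one part of the middle block to get condition~(i)) is the paper's, but the uniqueness step contains a genuine error. Having arranged a single non-zero entry $\sigma_{ab}$ with $a,b\in\{2,\dots,p-1\}$, you claim that $\gamma_i\gamma_j=g_ig_j$ for all $i\neq j$ together with $\gamma_a\gamma_b=g_ag_b\neq 0$ forces $g_{\setminus p}=r\,\gamma_{\setminus 1}$ with $r^2=1$. It does not. From $\gamma_a\gamma_k=g_ag_k$ you get $g_k=\gamma_k/r$ for $k\neq a$ while $g_a=r\gamma_a$; the $(a,b)$ entry is then matched automatically for \emph{every} $r$, and you only obtain $r^2=1$ if some product $\gamma_k\gamma_l\neq 0$ with $k,l\neq a$, i.e.\ if the support of $\gamma_{\setminus 1}$ has size at least \emph{three}. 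Your closing sentence, that uniqueness holds ``as soon as the rank-$1$ component has support of size at least two,'' is false: $\gamma=(1,1,0,\dots,0)^t$ and $g=(2,\tfrac12,0,\dots,0)^t$ have the same off-diagonal products but $\gamma\gamma^t\neq gg^t$, so $\Gamma_{\setminus 1}$ and $G_{\setminus p}$ need not agree up to sign and condition~(i) of Lemma~\ref{lem:2} is not secured. Equivalently, Lemma~\ref{lem:1} pins down the diagonal entry $\psi_{ii}$ only for $i\notin\{a,b\}$ when the sole available non-zero off-diagonal entry is $\sigma_{ab}$.

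The paper closes exactly this hole by a different preliminary reduction: it first disposes of the case where $\Sigma$ has only one non-zero off-diagonal entry by writing down the representation explicitly, and otherwise observes that two non-zero off-diagonal entries can be taken to share an index (the $4\times4$ hypothesis forces additional non-zeros if they were disjoint), so after relabeling $\sigma_{23}\neq0$ and $\sigma_{34}\neq0$, whence $\sigma_{24}=\gamma_2\gamma_4\neq 0$ as well. With the three non-zero entries $\sigma_{23},\sigma_{34},\sigma_{24}$ all inside $\{2,\dots,p-1\}$, Lemma~\ref{lem:1} applies to \emph{every} diagonal index of the middle block (for each $i$ one of the three entries avoids row and column $i$), giving $\Gamma_{\setminus1}\Gamma_{\setminus1}^t=G_{\setminus p}G_{\setminus p}^t$ and hence agreement up to sign. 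To repair your argument you would need to add this case split and strengthen the relabeling so that the rank-one part of $\Sigma_{\setminus\{1,p\},\setminus\{1,p\}}$ has support of size at least three.
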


\begin{proof}
  The `only if' part is immediate. We prove the `if' part by induction. The
  induction base is $p =4$ in which case the statement is vacuous. Now
  suppose that $p\ge 5$ and that $\Sigma$ has every principal $4 \times 4$
  submatrix in $F_{4,1}$ and at least one non-zero off-diagonal entry.  If
  no other off-diagonal entry is non-zero, then
  $\Sigma=\Delta+\Gamma\Gamma^t$ for a vector $\Gamma\in\RRR^p$ with
  exactly two non-zero entries. So, we may assume that $\Sigma$ has at
  least two non-zeros entries.  Without loss of generality, suppose that
  $\sigma_{23} \neq 0$ and $\sigma_{34} \neq 0$.
  
  By the induction hypothesis, $\Sigma_{\setminus p,\setminus p}$ and
  $\Sigma_{\setminus 1,\setminus 1}$ belong to $F_{p-1,1}$.  Thus we are
  able to write
  $$
  \Sigma_{\setminus p,\setminus p} = \Lambda + \Gamma\Gamma^t
  \quad\text{and}\quad \Sigma_{\setminus 1,\setminus 1} = D + GG^t
  $$
  with $\Gamma,G\in\RRR^{p-1}$. Therefore we have 2 representations of
  $\Sigma_{\setminus\{1,p\},\setminus\{1,p\}}$, namely,
  $$
  \Sigma_{\setminus\{1,p\},\setminus\{1,p\}} = \Lambda_{\setminus
    1,\setminus 1} + \Gamma_{\setminus 1} \Gamma_{\setminus 1}^t =
  D_{\setminus p,\setminus p} + G_{\setminus p}G_{\setminus p}^t.
  $$
  Note that again we assign row indices to the matrices $D$ and $G$ based on the correspondence to the original matrix $\Sigma$, so we use $D_{\setminus p,\setminus p}$ and $G_{\setminus p,\setminus p}$ instead of $D_{\setminus (p-1),\setminus (p-1)}$ and $G_{\setminus (p-1),\setminus (p-1)}$ respectively. 
  Since $\sigma_{23}=\gamma_{2}\gamma_3 \neq 0$ and
  $\sigma_{34}=\gamma_3\gamma_4 \neq 0$, we deduce that
  $\sigma_{24}=\gamma_{2}\gamma_4 \neq 0$.  Using these three non-zero
  entries of $\Sigma$ and applying Lemma~\ref{lem:1}, we know that $\Gamma_{\setminus
    1}\Gamma_{\setminus 1}^t = G_{\setminus p}G_{\setminus p}^t$.
  Therefore, $\Gamma_{\setminus 1}$ and $G_{\setminus p}$ can only differ
  by a sign.  Changing the sign of $G$ if necessary, we obtain that
  $\Gamma_{\setminus 1} = G_{\setminus p}$ and thus Lemma~\ref{lem:2} implies that
  the induction step goes through.
\end{proof}


\section{Two-factor models}
\label{sec:two-factor-models}

If $\Gamma\in\RRR^{p\times m}$ has rank $m$ and $G$ is another matrix in
$\RRR^{p\times m}$ that satisfies $GG^t=\Gamma\Gamma^t$, then $\Gamma=GQ$
for some orthogonal matrix $Q$ \cite[Lemma 5.1]{anderson:1956}.  However,
this holds more generally.

\begin{lemma}
  \label{lem:ortho-trans}
  If $\Gamma,G\in\RRR^{p\times m}$ are matrices with dimensions $p\ge m$
  that satisfy $GG^t=\Gamma\Gamma^t$, then $\Gamma=GQ$ for some orthogonal
  matrix $Q$.
\end{lemma}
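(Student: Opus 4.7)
The plan is to reduce the statement to the full column-rank case (which is Anderson's lemma cited above) by restricting both matrices to their common column space and then extending an orthogonal map built there to all of $\RRR^m$.

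First I would observe that, since the column space of any real matrix $M$ equals the column space of $MM^t$, the hypothesis $\Gamma\Gamma^t = GG^t$ forces $\Gamma$ and $G$ to have the same column space $U\subseteq\RRR^p$, and in particular the common rank $r := \rk(\Gamma) = \rk(G) \le m$. Picking an orthonormal basis $U\in\RRR^{p\times r}$ of this subspace, I can write $\Gamma = UA$ and $G = UB$ with $A,B\in\RRR^{r\times m}$ both of full row rank $r$. Because $U^tU = I_r$, the equality $\Gamma\Gamma^t = GG^t$ simplifies to $AA^t = BB^t$, and it clearly suffices to produce an orthogonal $Q\in\RRR^{m\times m}$ with $A = BQ$; then $\Gamma = UA = UBQ = GQ$.

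Next I would factor the row spaces of $A$ and $B$. Let $V,W\in\RRR^{m\times r}$ have orthonormal columns spanning the row spaces of $A$ and $B$ respectively; then $A = (AV)V^t$ and $B = (BW)W^t$, where the $r\times r$ blocks $AV$ and $BW$ are square and invertible (since $A$ and $B$ have rank $r$). Feeding $A = (AV)V^t$ and $B = (BW)W^t$ into $AA^t = BB^t$ and using $V^tV = W^tW = I_r$ yields $(AV)(AV)^t = (BW)(BW)^t$. Setting $R := (BW)^{-1}(AV)$, invertibility of $BW$ gives $RR^t = I_r$, so $R$ is orthogonal.

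Finally I would extend $V$ and $W$ to orthogonal matrices $[V,V_0],[W,W_0]\in\RRR^{m\times m}$ (the blocks $V_0, W_0$ have orthonormal columns spanning the null spaces of $A$ and $B$) and define
\[
  Q \;=\; W R V^t + W_0 V_0^t.
\]
Then $BQ = (BW)RV^t + (BW_0)V_0^t = (BW)RV^t = (AV)V^t = A$, using $BW_0 = 0$ and $(BW)R = AV$. A short computation with the identities $V^tV_0 = 0$ and $W^tW_0 = 0$ shows $QQ^t = WW^t + W_0W_0^t = I_m$, so $Q$ is orthogonal. The only step requiring a little care is verifying the orthogonality of $Q$ via the cross-term cancellations, which is routine; I do not anticipate a genuine obstacle.
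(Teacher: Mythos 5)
Your proof is correct, and its skeleton is the same as the paper's: reduce to the full-rank case by orthogonal rotations that isolate the zero directions, then extend the resulting orthogonal map by the identity on the complement. Indeed, your final formula $Q = WRV^t + W_0V_0^t$ is exactly the paper's $Q_2\,\mathrm{diag}(Q,I_{m-k})\,Q_1^t$ written out in block form, with $Q_1=[V,V_0]$ and $Q_2=[W,W_0]$. There are two genuine differences worth noting. First, you insert a preliminary reduction to the common column space in $\RRR^p$ (justified correctly by the fact that a real matrix $M$ and $MM^t$ have the same column space); this is harmless but unnecessary, since the paper works directly with the $p\times k$ full-column-rank matrices $\Gamma'$ and $G'$ without compressing the $p$-dimensional side. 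Second, and more substantively, your argument is self-contained: where the paper invokes Anderson's Lemma 5.1 to handle the full-rank case, you prove the (square, invertible) base case from scratch by checking that $R=(BW)^{-1}(AV)$ satisfies $RR^t=I_r$. The paper's route is shorter because it leans on the cited lemma; yours buys independence from that reference at the cost of some extra bookkeeping. The individual verifications you flag as routine (invertibility of $AV$ and $BW$, the cross-term cancellations giving $QQ^t=I_m$) all go through.
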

\begin{proof}
  Suppose $\Gamma$ has rank $k<m$.  Then $\rk(G)=k$ also.  There are now
  orthogonal transformations $Q_1$ and $Q_2$ such that $\Gamma
  Q_1=(\Gamma',0)$ and $G Q_2=(G',0)$, where $\Gamma'$ and $G'$ are full
  rank $p\times k$ matrices.  According to the above fact, $\Gamma'=G'Q$
  for some orthogonal $k\times k$ matrix $Q$.  Let
  $Q_3=\text{diag}(Q,I_{m-k})$.  Then $Q_2Q_3Q_1^t$ is an orthogonal matrix
  and $\Gamma=GQ_2Q_3Q_1^t$.
\end{proof}

By Lemma~\ref{lem:ortho-trans}, if the matrix
$\Sigma_{\setminus\{1,p\},\setminus\{1,p\}}$ has a unique representation as
the sum of a positive definite and diagonal matrix plus a positive
semi-definite matrix of rank $m$, then conditon (i) in Lemma 2 can be
satisfied by applying an orthogonal transformation.  We thus need to study
when this representation is unique and how uniqueness may fail for $m=2$
factors.  We begin our discussion by considering a $5\times 5$ matrix.
This prepares us for an induction step from $p=6$ to $p=7$ because in this
step $\Sigma_{\setminus\{1,p\},\setminus\{1,p\}}$ is of size $ 5\times 5$.

\begin{lemma}
  \label{lem:m2-unique}
  Suppose $\Sigma$ is a $5 \times 5$ positive definite matrix that can be
  written as
  \begin{equation}\label{reps}
    \Sigma=\Delta+\Gamma\Gamma^t,
  \end{equation}
  where $\Delta$ is positive definite and diagonal, and
  $\Gamma\in\RRR^{p\times 2}$ has rank $2$. Let $k$ be the largest integer
  $n$ for which there is an index set $A \subset [5]$ of cardinality $n$
  with $\rk(\Gamma_A)=1$.  Then under the assumptions that $\Sigma$
  has no representation like (\ref{reps}) with $\rk(\Gamma) \leq 1$, and
  every row of $\Sigma$ contains at least one non-zero off-diagonal entry,
  it holds that $k\le 3$ and we have the following:
\begin{enumerate}
\item[(i)] If $k=1$ or $k=2$, then the representation in (\ref{reps}) is
  unique, that is, $\Gamma\Gamma^t=GG^t$ for any other representation
  $\Sigma=D+GG^t$.
\item[(ii)] If $k=3$, then after a permutation of rows we may assume that
  $\rk(\Gamma_{[3]})=1$.  Then there exists an orthogonal matrix $Q$ such
  that 
  \[
  \Gamma Q = \begin{pmatrix}
    \gamma_{11}' & 0 \\
    \gamma_{21}' & 0 \\
    \gamma_{31}' & 0 \\
    \gamma_{41} & \gamma_{42}' \\
    \gamma_{51} & \gamma_{52}'
  \end{pmatrix},
  \]
  where primes indicate entries that are necessarily non-zero.  Moreover,
  for any other representation $\Sigma=D+GG^t$, it holds that $G=(g_{ij})$,
  when brought into the same form as $\Gamma$ by an orthogonal transformation,
  shares the first column with $\Gamma$ and satisfies
  $g_{42}'g_{52}'=\gamma_{42}'\gamma_{52}'$.
\end{enumerate}
\end{lemma}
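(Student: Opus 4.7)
The plan is to prove the three assertions in order: the bound $k\leq 3$, the uniqueness assertion in (i), and the structural description in (ii). The main obstacle will be the rigidity argument in case (ii), where I must rule out that $G$ has a genuinely different column structure even though the diagonal entries $d_{44},d_{55}$ may differ from $\delta_{44},\delta_{55}$. For $k\leq 3$, suppose for contradiction that some four-element subset, say $\{1,2,3,4\}$, has $\rk(\Gamma_{\{1,2,3,4\}})=1$. Then I can write $\Gamma_i=u_i v$ for $i\in\{1,2,3,4\}$, where $u_i\in\RRR$ and $v\in\RRR^{1\times 2}$ is a fixed unit row vector, and decompose $\Gamma_5=\alpha v+\beta v^{\perp}$ with $v^{\perp}$ a unit row vector orthogonal to $v$. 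Setting $\tilde\Gamma=(u_1,u_2,u_3,u_4,\alpha)^t\in\RRR^5$, a direct computation shows that $\tilde\Gamma\tilde\Gamma^t$ matches $\Gamma\Gamma^t$ in every off-diagonal entry, while $\Sigma-\tilde\Gamma\tilde\Gamma^t$ is diagonal with entries $\delta_{11},\ldots,\delta_{44}$ and $\delta_{55}+\beta^2$, all strictly positive. This gives a rank-$1$ representation of $\Sigma$, contradicting assumption (a); hence $k\leq 3$.

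For case (i) with $k\in\{1,2\}$, the matrices $\Gamma\Gamma^t$ and $GG^t$ share all off-diagonal entries (since $\Sigma-\Gamma\Gamma^t=\Delta$ and $\Sigma-GG^t=D$ are diagonal) and both have rank $2$ (a rank drop in $GG^t$ would again contradict (a)). So Lemma~\ref{lem:1} reduces the goal $\Gamma\Gamma^t=GG^t$ to exhibiting, for each $i\in[5]$, disjoint $2$-subsets $A,B\subseteq[5]\setminus\{i\}$ with $\rk(\Gamma_A)=\rk(\Gamma_B)=2$. Under $k\leq 2$, no three rows of $\Gamma$ are collinear, so any two collinear pairs of rows must be disjoint, and at most two disjoint collinear pairs occur among the five rows. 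A direct check of the three ways to partition the $4$-element set $[5]\setminus\{i\}$ into two pairs shows that at least one partition avoids every collinear pair. Lemma~\ref{lem:1} then yields $\delta_{ii}=d_{ii}$ for all $i$, so $\Delta=D$ and $\Gamma\Gamma^t=GG^t$.

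For case (ii) with $k=3$, permute indices so that $\rk(\Gamma_{[3]})=1$ and choose the orthogonal $Q$ aligning the common direction of rows $1,2,3$ with $e_1\in\RRR^2$, placing $\Gamma Q$ into the displayed form. The entries $\gamma_{i1}'$ for $i\in[3]$ are nonzero, else $\Gamma_i=0$ would force $\sigma_{ij}=0$ for all $j$, contradicting (b); the entries $\gamma_{42}',\gamma_{52}'$ are nonzero, else the corresponding row lies on the first axis, giving $k\geq 4$ and contradicting the bound just proved. For another representation $\Sigma=D+GG^t$, write $a_i=\gamma_{i1}'$ for $i\in[3]$, $b_j=\gamma_{j+3,1}$, $c_j=\gamma_{j+3,2}'$ for $j\in\{1,2\}$, and let $G_1,\ldots,G_5\in\RRR^2$ denote the rows of $G$. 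The off-diagonal matching yields $G_i\cdot G_j=a_ia_j\neq 0$ for distinct $i,j\in[3]$, which after rescaling to $\widetilde G_i:=G_i/a_i$ becomes $\widetilde G_i\cdot\widetilde G_j=1$. A brief argument in $\RRR^2$ shows that this pairwise-product constraint allows at most one of the three $\widetilde G_i$ to deviate from a common unit vector $u$; in the deviating case, the dot-product relations with rows $4$ and $5$ force $G_4=b_1u$ and $G_5=b_2u$, whence $\sigma_{45}=b_1b_2$ in contradiction to $\sigma_{45}=b_1b_2+c_1c_2$ with $c_1c_2\neq 0$. Hence $G_{[3]}$ has rank $1$. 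Rotating $G$'s columns so that $u$ maps to $e_1$ (and if needed flipping the sign of the first column) brings $G$ into the same form as $\Gamma$; matching off-diagonals on $[3]\times\{4\}$ and $[3]\times\{5\}$ then forces the first column of $G$ to equal that of $\Gamma$, and the identity $\sigma_{45}=b_1b_2+c_1'c_2'$ finally delivers $c_1'c_2'=c_1c_2$, with $c_1',c_2'\neq 0$ by the $k\leq 3$ bound applied to the $G$-representation.
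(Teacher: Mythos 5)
Your proof of the bound $k\le 3$ and your case (i) are correct. In fact your treatment of (i) --- collinear pairs of rows must be pairwise disjoint when $k\le 2$, so at most two exist, so one of the three pair-partitions of $[5]\setminus\{i\}$ consists of two rank-two pairs, giving the nonvanishing minor needed for Lemma~\ref{lem:1} --- is a cleaner justification than the paper's, which normalizes $\Gamma$ and simply asserts that enough non-zero off-diagonal $2\times 2$ minors are available.

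Case (ii), however, contains a genuine gap. You claim that the constraints $\widetilde G_i\cdot\widetilde G_j=1$ for distinct $i,j\in[3]$ alone allow at most one of the $\widetilde G_i$ to deviate from a common unit vector $u$. That is false: the vectors $(2,0)$, $(\tfrac12,1)$, $(\tfrac12,\tfrac34)$ in $\RRR^2$ have all pairwise dot products equal to $1$, yet they are pairwise distinct and none has norm $1$. So the configuration in which $\rk(G_{[3]})=2$ with all three rescaled rows distinct is not covered by your dichotomy, and your subsequent contradiction (which presupposes two of the rows sharing a common $u$) never engages it. The off-diagonal data on $[3]\times[3]$ cannot by itself force $\rk(G_{[3]})=1$; you must first pin down the diagonal. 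The paper does this by applying Lemma~\ref{lem:1} with $i=1$, $A=\{3,5\}$, $B=\{2,4\}$, where $\det(\Psi_{A,B})=\det(\Gamma_{A})\det(\Gamma_{B})\neq 0$ thanks to the primed entries, and similarly for $i=2,3$; this yields $\phi_{ii}=\psi_{ii}$ for $i\in[3]$, hence $G_{[3]}G_{[3]}^t=\Gamma_{[3]}\Gamma_{[3]}^t$ has rank $1$ outright (and your Cauchy--Schwarz-style reasoning then becomes valid because $|\widetilde G_i|=1$ is known). One could instead patch your route by also invoking the relations with rows $4$ and $5$ --- the Gram matrix of $\widetilde G_1,\widetilde G_2,\widetilde G_3$ and the vector $w$ with $G_4=b_1w$ equals the all-ones matrix plus a diagonal and must have rank at most $2$ --- but that argument needs $b_1,b_2\neq 0$ and a separate treatment of the degenerate cases, none of which you supply. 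The remainder of your case (ii), recovering the first column of $G$ and the identity $g_{42}'g_{52}'=\gamma_{42}'\gamma_{52}'$, is fine once $\rk(G_{[3]})=1$ is actually secured.
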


\begin{proof}
  Since every row of $\Sigma$ is assumed to contain a non-zero entry, no
  row of $\Gamma$ is zero.  Assuming that $\rk(\Gamma)\le 1$ is not
  possible in a representation of $\Sigma$, we must have that $k \leq 4$.
  If $k=4$, by applying an orthogonal transform, we can write $\Gamma$ as
  \[
  \Gamma = \begin{pmatrix}
    \Gamma_{[4],1} & 0 \\
    \gamma_{51} & \gamma_{52}
  \end{pmatrix}.
  \]
  But then we have a contradiction to our assumptions because we can
  represent $\Sigma$ as
  \[
  \Sigma =   \begin{pmatrix}
    \Delta_{[4]} & 0\\
    0 &\delta_{55}+\gamma_{52}^2
  \end{pmatrix} +
  \begin{pmatrix}
    \Gamma_{[4],1} \\
    \gamma_{51}
  \end{pmatrix}
  \begin{pmatrix}
    \Gamma_{[4],1} \\
    \gamma_{51}
  \end{pmatrix}^t.
  \]
  Hence, we must have $k \leq 3$.
  
  Now suppose there is another representation $\Sigma = D + GG^{t}$.  Let
  $\Psi=\Gamma\Gamma^{t}$ and $\Phi=GG^{t}$.  There are two cases.
  \begin{itemize}
  \item[(a)] \underline{$k=1,2$:} If $k=2$, we may assume
    $\rk(\Gamma_{[2]})=1$.   
    By applying an orthogonal transform, we can write $\Gamma$ as 
    \[
    \Gamma = \begin{pmatrix}
      \gamma_{11}' & 0 \\
      \gamma_{21}' & 0 \\
      \gamma_{31}' & \gamma_{32}' \\
      \gamma_{41} & \gamma_{42}' \\
      \gamma_{51} & \gamma_{52}'
    \end{pmatrix},
    \]
    where we use again primes to highlight non-zero entries.  Since
    $k<3$, the submatrix $\Gamma_{\{3,4,5\}}$ has rank $2$,
    and thus we have enough non-zero $2 \times 2$ off-diagonal minors of
    $\Phi$ to apply Lemma~\ref{lem:1} and deduce uniqueness of the
    representation.  If $k=1$, then Lemma~\ref{lem:1} applies immediately.
  \item[(b)] \underline{$k=3$:} Let us assume $\rk(\Gamma_{[3]})=1$, in
    which case we can write $\Gamma$ as
    \[
    \Gamma = \begin{pmatrix}
      \gamma_{11}' & 0 \\
      \gamma_{21}' & 0 \\
      \gamma_{31}' & 0 \\
      \gamma_{41} & \gamma_{42}' \\
      \gamma_{51} & \gamma_{52}'
    \end{pmatrix}.
    \]
    Since the off-diagonal minor
    \[
    \left| \begin{array}{cc}
        \psi_{32} & \psi_{34} \\
        \psi_{52} & \psi_{54}
      \end{array} \right| \neq 0,
    \]
    Lemma~\ref{lem:1} yields that $\psi_{11} = \phi_{11}$. Similarly, we
    obtain that $\psi_{22} = \phi_{22}$ and $\psi_{33} = \phi_{33}$.  It
    follows that $\rk(G_{[3]})=1$, and thus we can write $G$ as
    \[
    G =
    \begin{pmatrix}
      \gamma_{11}' & 0 \\
      \gamma_{21}' & 0 \\
      \gamma_{31}' & 0 \\
      g_{41} & g_{42}' \\
      g_{51} & g_{52}'
    \end{pmatrix}.
    \]
    Since $\psi_{14}=\gamma_{11}'\gamma_{41}=\phi_{14}=\gamma_{11}'g_{41}$
    and $\psi_{15}=\gamma_{11}'\gamma_{51}=\phi_{15}=\gamma_{11}'g_{51}$,
    we know $\gamma_{41}=g_{41}$ and $\gamma_{51}=g_{51}$.  Therefore,
    \[
    G = 
    \begin{pmatrix}
      \gamma_{11}' & 0 \\
      \gamma_{21}' & 0 \\
      \gamma_{31}' & 0 \\
      \gamma_{41} & g_{42}' \\
      \gamma_{51} & g_{52}'
    \end{pmatrix}.
    \]
    For this case the representation is not unique, but since
    $\psi_{45}=\phi_{45}$ it must hold that
    $\gamma_{42}'\gamma_{52}'=g_{42}'g_{52}'$.  
\end{itemize}
\end{proof}

Equipped with Lemma~\ref{lem:m2-unique}, we are able to prove finiteness
for $m=2$ factors.

\begin{theorem}[$m=2$]
  A positive definite matrix $\Sigma\in\RRR^{p \times p}$ with $p \geq 6$
  belongs to $F_{p,2} \setminus F_{p,1}$ if and only if every principal $6
  \times 6$ submatrix belongs to $F_{6,2}$ and at least one $4 \times 4$
  principal submatrix does not belong to $F_{4,1}$.
\end{theorem}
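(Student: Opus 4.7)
The plan is to proceed by induction on $p$, with base case $p=6$ (vacuous) and inductive step for $p\ge 7$, invoking Lemma~\ref{lem:2} at $m=2$ (which requires $p\ge 2m+3=7$). The `only if' direction is immediate. For `if', I first observe that $\Sigma_{\setminus p,\setminus p}$ and $\Sigma_{\setminus 1,\setminus 1}$ both lie in $F_{p-1,2}$: each either satisfies the inductive hypothesis (belonging to $F_{p-1,2}\setminus F_{p-1,1}$) or has every $4\times 4$ principal submatrix in $F_{4,1}$, in which latter case Theorem~\ref{thm:m1} places it in $F_{p-1,1}\subset F_{p-1,2}$. I then write
\[
\Sigma_{\setminus p,\setminus p}=\Delta+\Gamma\Gamma^t,\qquad \Sigma_{\setminus 1,\setminus 1}=D+GG^t
\]
with $\Gamma,G\in\RRR^{(p-1)\times 2}$, and aim to verify the two hypotheses of Lemma~\ref{lem:2}.

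For condition (ii) I need disjoint two-element subsets $B,C\subset\{2,\ldots,p-1\}$ with $\det(\Sigma_{B,C})\neq 0$. The hypothesis that some $4\times 4$ principal submatrix of $\Sigma$ lies outside $F_{4,1}$, combined with the slack $p-2\ge 5$, should supply such $B$ and $C$; the potential obstruction---every off-diagonal $2\times 2$ minor within $\{2,\ldots,p-1\}$ vanishes---would force the interior block $\Sigma_{\setminus\{1,p\},\setminus\{1,p\}}$ into an essentially one-factor structure that is absorbed into the degenerate-case analysis below.

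Condition (i), asking that $\Gamma_{\setminus 1}=G_{\setminus p}$ after an orthogonal rotation of $G$, is the heart of the argument. By Lemma~\ref{lem:ortho-trans} it reduces to showing $\Gamma_{\setminus 1}\Gamma_{\setminus 1}^t=G_{\setminus p}G_{\setminus p}^t$, i.e., uniqueness of the rank-$2$ summand of $\Sigma_{\setminus\{1,p\},\setminus\{1,p\}}$. The key tool is Lemma~\ref{lem:m2-unique}, applied to a $5\times 5$ principal submatrix $\Sigma_{A,A}$ with $A\subset\{2,\ldots,p-1\}$. If some such $A$ lies in case (i) of that lemma---no three rows of $\Gamma_A$ collinear---then $\Gamma_A\Gamma_A^t=G_AG_A^t$; since $\rk(\Gamma_A)=2$, the identities $\Gamma_j\Gamma_A^t=G_j\Gamma_A^t=\Sigma_{j,A}$ propagate $\Gamma_j=G_j$ to every remaining row index $j\in\{2,\ldots,p-1\}$, establishing condition (i).

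The main obstacle is the degenerate regime in which no $5$-subset $A\subset\{2,\ldots,p-1\}$ avoids case (ii) of Lemma~\ref{lem:m2-unique}; a short pigeonhole argument shows this forces the rows of $\Gamma_{\{2,\ldots,p-1\}}$ to fall into at most two one-dimensional equivalence classes. If there is only one such class, $\Sigma_{\setminus\{1,p\},\setminus\{1,p\}}$ already lies in $F_{p-2,1}$, the offending $4\times 4$ submatrix from the hypothesis must involve row $1$ or $p$, and one can construct a two-factor representation of $\Sigma$ directly, with first column aligned to the common rank-one direction and second column supported on $\{1,p\}$. If there are exactly two classes, Lemma~\ref{lem:m2-unique}(ii) still pins down the first columns of the two representations and constrains their second columns via the product relations $g'_{42}g'_{52}=\gamma'_{42}\gamma'_{52}$; pasting these across the overlapping $5\times 5$ blocks should restore $G_{\setminus p}G_{\setminus p}^t=\Gamma_{\setminus 1}\Gamma_{\setminus 1}^t$ and close the induction. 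I expect the delicate work to concentrate in this two-class subcase and in verifying that condition (ii) of Lemma~\ref{lem:2} remains available there.
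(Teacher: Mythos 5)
Your overall skeleton matches the paper's: induction on $p$ via Lemma~\ref{lem:2}, with Lemma~\ref{lem:m2-unique} supplying uniqueness of the rank-two summand of the central block $\Sigma_{\setminus\{1,p\},\setminus\{1,p\}}$ in the generic case. The non-degenerate branches of your argument (case (i) of Lemma~\ref{lem:m2-unique}, and the propagation of $\Gamma_j = G_j$ to the remaining rows using $\rk(\Gamma_A)=2$) are sound and essentially what the paper does.

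The genuine gap is in your two-class subcase. When the second column of $\Gamma_{\{2,\ldots,p-1\}}$ (and of $G_{\{2,\ldots,p-1\}}$) is supported on exactly two indices, say $5$ and $6$, the central block $\Sigma_{\setminus\{1,p\},\setminus\{1,p\}}$ \emph{genuinely} admits a one-parameter family of representations: only the product $\gamma_{52}\gamma_{62}=g_{52}g_{62}$ is determined, and $(\gamma_{52},\gamma_{62})=(2,1)$ versus $(g_{52},g_{62})=(1,2)$ are both consistent with every principal submatrix of the central block. Consequently no amount of ``pasting across overlapping $5\times 5$ blocks'' inside $\{2,\ldots,p-1\}$ can force $\Gamma_{\setminus 1}\Gamma_{\setminus 1}^t=G_{\setminus p}G_{\setminus p}^t$; every such block returns the same product relation and nothing more. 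The paper's resolution is to leave the central block entirely and invoke the hypothesis on a $6\times 6$ principal submatrix that contains \emph{both} index $1$ and index $p$, namely $S=\Sigma_{\{1,3,4,5,6,p\},\{1,3,4,5,6,p\}}$. Because $S\in F_{6,2}$ by assumption (this is the one place where the full hypothesis is needed beyond the induction hypothesis on $(p-1)\times(p-1)$ blocks), one can compare the representation of $S_{\setminus 6,\setminus 6}$ induced by $\Sigma_{\setminus p,\setminus p}$ (which sees $\gamma_{12}\neq 0$), the representation of $S_{\setminus 1,\setminus 1}$ induced by $\Sigma_{\setminus 1,\setminus 1}$ (which sees $g_{p2}\neq 0$), and an intrinsic representation $S=C+FF^t$; Lemma~\ref{lem:1} then forces the diagonal entries at position $5$ to agree, giving $\gamma_{51}^2+\gamma_{52}^2=\gamma_{51}^2+g_{52}^2$ and hence $\gamma_{52}=g_{52}$ after a sign normalization. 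Without this step your induction does not close. Relatedly, your one-class subcase and your treatment of condition (ii) of Lemma~\ref{lem:2} are only sketched; the paper sidesteps the one-class situation by placing the $4\times 4$ submatrix outside $F_{4,1}$ inside $\Sigma_{\{2,\ldots,6\},\{2,\ldots,6\}}$ without loss of generality, and handles separately the case of a row of that block with all off-diagonal entries zero.
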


\begin{proof}
  As in the proof of Theorem~\ref{thm:m1}, only the induction step requires
  work.  So suppose that every principal $6 \times 6$ submatrix of $\Sigma$
  is in $F_{6,2}$.  By the induction hypothesis, all the $(p-1) \times
  (p-1)$ principal submatrices belong to $F_{p-1,2}$, and without loss of
  generality, we may assume that $\Sigma_{\{2,\ldots,6\},\{2,\ldots,6\}}$
  contains a $4 \times 4$ submatrix not in $F_{4,1}$. 
  
  Suppose that some row of $\Sigma_{\{2,\ldots,6\},\{2,\ldots,6\}}$, say
  the first one, has all off-diagonal entries equal to zero. Consider the
  representation $\Sigma_{\setminus p,\setminus p} = \Lambda + \Gamma
  \Gamma^t$.  Then $\rk(\Gamma_{\{3,4,5,6\}\times[2]})=2$, for otherwise
  $\Sigma_{\{2,\ldots,6\},\{2,\ldots,6\}}$ would be in $F_{5,1}$. It
  follows that $\Gamma_2=(0,0)$ and all the off-diagonal entries in the
  second row of $\Sigma$ except for the last one are zero.  By considering
  a representation of the matrix $\Sigma_{\setminus 1,\setminus 1}$
  instead, we can deduce that in fact all of the off-diagonal entries in
  the second row of $\Sigma$ are zero.  Hence, the induction step goes
  through easily as we can insert a row of zeros into the matrix $\Gamma$
  in a representation $\Sigma_{\setminus 2,\setminus 2} = \Lambda + \Gamma
  \Gamma^t$.
  
  In the remaining cases, we can assume that the matrix
  $\Sigma_{\{2,\ldots,6\},\{2,\ldots,6\}}$ satisfies the two conditions in
  Lemma~\ref{lem:m2-unique}.  If $\Sigma_{\{2,\ldots,6\},\{2,\ldots,6\}}$
  belongs to case (i) of Lemma~\ref{lem:m2-unique}, then the center matrix
  $\Sigma_{\setminus\{1,p\},\setminus\{1,p\}}$ has a unique representation.
  To see this, note that by Lemma~\ref{lem:m2-unique},
  $\Sigma_{\{2,\ldots,6\},\{2,\ldots,6\}}$ has a unique representation, and
  that we can use a non-zero off-diagonal $2 \times 2$ minor from
  $\Sigma_{\{2,\ldots,6\},\{2,\ldots,6\}}$ to deduce the uniqueness of the
  representation of $\Sigma_{\setminus\{1,p\},\setminus\{1,p\}}$.  Therefore,
  Lemma~\ref{lem:2} implies that the induction step goes through in this
  case.  
  
  Now assume that $\Sigma_{\{2,\ldots,6\},\{2,\ldots,6\}}$ belongs to case
  (ii) of Lemma~\ref{lem:m2-unique}.  We first write $\Sigma_{\setminus
    p,\setminus p} = \Lambda + \Gamma \Gamma^t$ and $\Sigma_{\setminus
    1,\setminus 1} = D + GG^t$, where $\Gamma$ and $G$ are $(p-1) \times 2$
  matrices.  By Lemma~\ref{lem:m2-unique}, $\Gamma$ and $G$ have the
  following typical forms
  \[
  \Gamma = \begin{pmatrix}
    \gamma_{11} & \gamma_{12} \\
    \gamma_{21}' & 0 \\
    \gamma_{31}' & 0 \\
    \gamma_{41}' & 0 \\
    \gamma_{51} & \gamma_{52}' \\
    \gamma_{61} & \gamma_{62}'  \\
    \gamma_{71} & \gamma_{72}  \\
    \vdots & \vdots\\
    \gamma_{p-1,1} & \gamma_{p-1,2}  \\
    &
    \end{pmatrix}
    \quad\text{and}\quad G = \begin{pmatrix}
      & \\
      \gamma_{21}' & 0 \\
      \gamma_{31}' & 0 \\
      \gamma_{41}' & 0 \\
      \gamma_{51} & g_{52}' \\
      \gamma_{61} & g_{62}' \\
      g_{71} & g_{72} \\
      \vdots & \vdots\\
      g_{p-1,1} & g_{p-1,2} \\
      g_{p,1} & g_{p,2}
    \end{pmatrix},
  \]
  where we assigned row indices based on  the correspondence to the rows
  in $\Sigma$.
  
  If at least one of the entries $\gamma_{72},\dots,\gamma_{p-1,2}$ or
  $g_{72},\dots,g_{p-1,2}$ is non-zero, then Lemma~\ref{lem:1} implies
  uniqueness of the representation of
  $\Sigma_{\setminus\{1,p\},\setminus\{1,p\}}$, which allows to apply
  Lemma~\ref{lem:2}.  Otherwise, we only know that
  $\gamma_{52}'\gamma_{62}'=g_{52}'g_{62}'$. If
  $(\gamma_{52}',\gamma_{62}') = (g_{52}',g_{62}')$ or $\gamma_{12}=0$ or
  $g_{p,2}=0$, then the induction step goes through.  So we are left with
  the case, where $\gamma_{72}= \cdots = \gamma_{p-1,2} = g_{72} = \cdots = g_{p-1,2} = 0$,
  and $\gamma_{12}\neq 0$, $g_{p,2} \neq 0$.  Multiplying
  the second column of $G$ by $-1$ if necessary, we can assume that
  $\gamma_{52}'$ and $g_{52}'$ have the same sign.  To complete the proof
  we will show that in this case $\gamma_{52}'=g_{52}'$.
  
  Consider the submatrix $S=\Sigma_{\{1,3,4,5,6,p\},\{1,3,4,5,6,p\}}$.
  From the representation of $\Sigma_{\setminus p,\setminus p}$, we obtain
  that
  \begin{align}
    \label{eq:Srep1}
    S_{\setminus 6,\setminus 6} &=
    \Delta_{\{1,3,4,5,6\},\{1,3,4,5,6\}} +
    \begin{pmatrix}
      \gamma_{11} & \gamma_{12}' \\
      \gamma_{31}' & 0 \\
      \gamma_{41}' & 0 \\
      \gamma_{51} & \gamma_{52}' \\
      \gamma_{61} & \gamma_{62}'
    \end{pmatrix}
    \begin{pmatrix}
      \gamma_{11} & \gamma_{12}' \\
      \gamma_{31}' & 0 \\
      \gamma_{41}' & 0 \\
      \gamma_{51} & \gamma_{52}' \\
      \gamma_{61} & \gamma_{62}'
    \end{pmatrix}^t.
  \end{align}
  Similarly, the representation
  \begin{align}
    \label{eq:Srep2}
    S_{\setminus 1,\setminus 1} &= D_{\{3,4,5,6,p\},\{3,4,5,6,p\}}+
    \begin{pmatrix}
      \gamma_{31}' & 0 \\
      \gamma_{41}' & 0 \\
      \gamma_{51} & g_{52}' \\
      \gamma_{61} & g_{62}' \\
      g_{p1} & g_{p2}'
    \end{pmatrix}
    \begin{pmatrix}
      \gamma_{31}' & 0 \\
      \gamma_{41}' & 0 \\
      \gamma_{51} & g_{52}' \\
      \gamma_{61} & g_{62}' \\
      g_{p1} & g_{p2}'
    \end{pmatrix}^t.
  \end{align}
  is inherited from the representation of $\Sigma_{\setminus 1,\setminus
    1}$.  Since $S \in F_{6,2}$, it also has a representation $S = C +
  FF^t$ as in (\ref{reps}) with $C=(c_{ij})$ diagonal. We label the rows
  and columns of $C$ by $\{1,3,4,5,6,p\}$ based on  the correspondence to the rows
  in $\Sigma$. Using the structure
  of the two representations of $S_{\setminus 1,\setminus 1}$ and
  $S_{\setminus 6,\setminus 6}$ in (\ref{eq:Srep1}) and (\ref{eq:Srep2}),
  we can deduce via Lemma~\ref{lem:1} that $\delta_{55} = c_{55} = d_{55}$;
  again we assign indices for $\delta_{55}$ and $d_{55}$ 
  according to the correspondence to rows in $\Sigma$.  It follows that
  $\gamma_{51}^2+\gamma_{52}^2=\gamma_{51}^2+g_{52}^2$.  Having assumed
  that $\gamma_{52}$ and $g_{52}$ have the same sign, we conclude that
  $\gamma_{52}=g_{52}$.
\end{proof}


\section{Conclusion}
\label{sec:conclusion}

Our main result, Theorem~\ref{thm:main}, shows that for $m\le 2$ factors
the covariance matrices of distributions in the factor analysis model
possess a finiteness structure.  This also of interest for recent work on
the algebraic geometry of the two-factor model \citep{sullivant:gb}. Our
proof uses only linear algebra and shows that, in the covered cases, the
distinguished matrix size for finiteness is $2(m+1)$, that is, one can
decide whether a covariance matrix belongs to $F_{p,m}$ by only looking at
$2(m+1) \times 2(m+1)$ principal submatrices.  Unfortunately, our arguments
seem difficult to extend to the cases with $m\ge 3$ as one would need to
show that larger off-diagonal minors do not vanish in certain situations.

In unpublished work concerning a closure of the model, \cite{draisma} shows
that finiteness holds also for an arbitrary number of factors $m$.
However, his method of proof does not provide the distinguished matrix size
at which finiteness occurs.  It is natural to conjecture that this matrix
size is equal to $2(m+1)$ in general.  The following example clarifies that
the distinguished matrix size cannot be smaller.

\begin{example}
  Consider the matrix
  \[
  \Sigma = \begin{pmatrix}
    2I_{m+1} & I_{m+1} \\
    I_{m+1} & 2I_{m+1} 
  \end{pmatrix}
  \]
  where $I_{m+1}$ is an $(m+1) \times (m+1)$ identity matrix.  Then every
  $(2m+1) \times (2m+1)$ principal submatrix belongs to $F_{2m+1,m}$.  For
  example, the following matrix obtained by deleting one row and one column
  of $\Sigma$ can be written as
  \[
  \begin{pmatrix}
    2I_{m} & 0 & I_{m} \\
    0 & 2 & 0 \\
    I_{m} & 0 & 2I_{m}
  \end{pmatrix}
  = \begin{pmatrix}
    I_{m} & 0 & 0 \\
    0 & 2 & 0 \\
    0 & 0 &  I_{m} 
  \end{pmatrix}
  + \begin{pmatrix}
    I_{m} \\
    0  \\
    I_{m}
  \end{pmatrix}
  \begin{pmatrix}
    I_{m} \\ 0 \\ I_{m}
  \end{pmatrix}^t.
  \]
  Nevertheless, $\Sigma \notin F_{2m+2,m}$, because the off-diagonal block
  $\Sigma_{[m+1], [2m+2]\setminus[m+1]}$ has rank $(m+1)$.
\end{example}

\bibliographystyle{plainnat}
\bibliography{finiteness}

\end{document}